\documentclass[11pt,a4paper]{article}
\usepackage[top=2.6cm,bottom=2.6cm,left=2.6cm,right=2.6cm]{geometry}
\usepackage[english]{babel}
\usepackage[utf8]{inputenc}
\usepackage[T1]{fontenc}
\usepackage{amsmath,amssymb,amsthm,mathtools,enumerate,color}
\usepackage[hidelinks]{hyperref}
\numberwithin{equation}{section}

\newtheorem{theorem}{Theorem}[section]
\newtheorem*{theorem*}{Theorem}
\newtheorem{proposition}[theorem]{Proposition}
\newtheorem{lemma}[theorem]{Lemma}
\newtheorem{corollary}[theorem]{Corollary}
\theoremstyle{definition}

\newtheorem{remark}[theorem]{Remark}
\newtheorem*{remark*}{Remark}
\numberwithin{equation}{section}

\newcommand{\R}{\mathbb R}
\newcommand{\C}{\mathbb C}

\newcommand{\Ker}{\mathrm{Ker}}
\renewcommand{\ker}{\mathrm{Ker}}
\newcommand{\im}{\operatorname{Im}}

\date{August 3, 2026}

\begin{document}
\title{A Direct Polynomial Approach to Spectral Decomposition}
\author{Simon Bossoney and Marc Troyanov\\
\small École Polytechnique Fédérale de Lausanne\footnote{simon.bossoney@epfl.ch, marc.troyanov@epfl.ch}}

\maketitle

\begin{abstract}
We give a direct construction of the spectral projectors of a complex square matrix $A$ based on explicit Hermite interpolation polynomials introduced in
\cite{BT1}. Evaluating these polynomials at the matrix yields the spectral decomposition $A=\sum_{i=1}^r(\lambda_iP_i+N_i)$ from which the Primary Decomposition Theorem, the Cayley--Hamilton theorem, criteria for diagonalizability, the spectral theorem for normal matrices, and an explicit functional calculus for matrices are obtained by short and direct arguments. The same construction, extended to perfect fields by a Galois invariance argument, also yields the Jordan--Chevalley decomposition.

\medskip 

\noindent 
Keywords: Spectral decomposition, spectral projectors, Frobenius covariants, Hermite interpolation, functional calculus, primary decomposition, Jordan--Chevalley decomposition.

\smallskip

\noindent 
AMS Subject Class:  {Primary 15A21, 15A18; Secondary 41A05, 12F10, 20G15}
\end{abstract}


\section{Introduction}
Several fundamental results in linear algebra can be derived from the spectral decomposition of a matrix. The goal of this article is to show that the corresponding spectral projectors admit a particularly simple polynomial construction, from which a large part of the classical structure theory follows directly.

The construction is based on the \emph{Hermite interpolation polynomials}\footnote{Note that Verde-Star \cite{VerdeStar} uses the same terminology for a related, but different, family of interpolation polynomials.} introduced in \cite {BT1}:
\begin{equation}\label{defhi}
 h_i(t)=1-\left(1-\prod_{j\ne i}\left(\frac{t-\lambda_j}{\lambda_i- \lambda_j}\right)^{m_j} \right)^{m_i}
\end{equation}
for some distinct $\lambda_1, \dots \lambda_r \in \C$ and multiplicities $m_i\in \mathbb{N}$. These polynomials
are the natural analogs of the classical Lagrange polynomials for interpolation with higher multiplicities;  they satisfy the congruences
$$
 h_i(t)\equiv\delta_{ij}\ \bmod{(t-\lambda_j)^{m_j}}.
$$
Given a matrix $A\in M_n(\C)$ with minimal polynomial $\mu_A(t)=\prod_{i=1}^r(t-\lambda_i)^{m_i}$, evaluating these polynomials at $A$ gives the spectral decomposition
$$
 A=\sum_{i=1}^r(\lambda_iP_i+N_i),
$$
where $P_i=h_i(A)$ are the spectral projectors and $N_i=(A-\lambda_i I_n)P_i$ are the nilpotent components
(see Theorem \ref{thm:spectral-decomp}).
From this decomposition, one easily derives the primary decomposition Theorem, the Cayley-Hamilton Theorem, the diagonalization criteria, and the Spectral Theorem for normal matrices. This same approach yields an explicit functional calculus for matrices.

In the final part of the paper, we extend our consideration to matrices over an arbitrary perfect field. Although the individual spectral projectors need no longer be defined over the ground field, the polynomial producing the semisimple part of the decomposition is shown to have coefficients in the ground field. This immediately gives the Jordan--Chevalley decomposition.

Polynomial spectral projectors have long been used in linear algebra, for instance in the proof of the Primary Decomposition Theorem given by Hoffman and Kunze \cite{HoffmanKunze}. The distinguishing feature of the present approach is the explicit construction of these projectors by means of the elementary polynomials \eqref{defhi}. 

\section{Spectral resolution of a complex square matrix}\label{sec:spectral}

\subsection{Settings and the main Theorem}
For simplicity, we begin with the classical setting of a complex square matrix.  Given a nonzero matrix $A\in M_n(\C)$, we denote by $\C[A]$ the unital subalgebra of $M_n(\C)$ generated by $A$,
that is $\C[A]=\operatorname{span}\{A^k\mid k\ge0\}$.  This is a commutative algebra and the evaluation map
$$
\Phi_A:\C[t]\longrightarrow\C[A],\qquad p\longmapsto p(A),
$$
is a surjective algebra homomorphism. Because $\dim(\C[A]) <\infty$ whereas $\dim\C[t]=\infty$, this map is not injective, and we have the isomorphism 
$$
 \C[A]\cong\C[t]/\ker(\Phi_A).
$$
The nonzero elements in $\Ker(\Phi_A)$ are the \emph{annihilating polynomials} of $A$. Among them, there exists a unique monic polynomial of minimal degree, called the \emph{minimal polynomial} of $A$ and denoted by $\mu_A$. Using Euclidean division, one sees  that every annihilating polynomial is a multiple of $\mu_A$. Equivalently,
$$
 \ker(\Phi_A) = \left( \mu_A\right)
$$
is the principal ideal generated by $\mu_A$. The central result of this paper is the following 
\begin{theorem}[Spectral decomposition of a complex matrix]\label{thm:spectral-decomp}
Let $\mu_A(t)=\prod_{i=1}^r(t-\lambda_i)^{m_i}$ be the complete factorization of the minimal polynomial of $A$. Then $A$  can be written as 
\begin{equation}\label{spec.dec}
    A=\sum_{i=1}^r(\lambda_iP_i+N_i),
\end{equation}
where $P_i,N_i\in\C[A]$ are defined by
\begin{equation}\label{defNP}
 P_i = h_i(A), \qquad  N_i = (A-\lambda_i) h_i(A). 
\end{equation}

Furthermore
\begin{equation}\label{propNP}
 P_iP_j=\delta_{ij}P_i,\qquad \sum_{i=1}^rP_i=I_n,\qquad  N_i^k=(A-\lambda_iI_n)^kP_i\quad(k\ge1),
\end{equation}
and each $N_i$ is nilpotent of order $m_i$.
\end{theorem}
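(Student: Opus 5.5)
The plan is to transfer every identity to the level of polynomials modulo $\mu_A$, via the evaluation homomorphism $\Phi_A$ and the fact that $\ker(\Phi_A)=(\mu_A)$. Throughout, $(t-\lambda_j)^{m_j}$ for $j=1,\dots,r$ are pairwise coprime and their product is $\mu_A$. So a polynomial $q$ lies in $(\mu_A)$ as soon as $(t-\lambda_j)^{m_j}\mid q$ for every $j$. The basic input is the congruence $h_i\equiv\delta_{ij}\bmod (t-\lambda_j)^{m_j}$, which I will verify directly from \eqref{defhi}:
\begin{itemize}
\item For $j\ne i$, the product $\prod_{k\ne i}\bigl(\frac{t-\lambda_k}{\lambda_i-\lambda_k}\bigr)^{m_k}$ is divisible by $(t-\lambda_j)^{m_j}$. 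Hence $1-\prod(\cdots)\equiv 1$, and so $h_i\equiv 1-1=0$.
\item For $j=i$, set $g(t)=\prod_{k\ne i}\bigl(\frac{t-\lambda_k}{\lambda_i-\lambda_k}\bigr)^{m_k}$. Then $g(\lambda_i)=1$, so $(t-\lambda_i)\mid 1-g$. Therefore $(t-\lambda_i)^{m_i}\mid (1-g)^{m_i}$, giving $h_i\equiv 1$.
\end{itemize}

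From the congruence I will derive three polynomial facts:
\begin{itemize}
\item $h_ih_j-\delta_{ij}h_i\equiv \delta_{ik}\delta_{jk}-\delta_{ij}\delta_{ik}=0 \bmod (t-\lambda_k)^{m_k}$ for every $k$.
\item $\sum_i h_i-1\equiv 0$ modulo each $(t-\lambda_k)^{m_k}$.
\item $t-\sum_i(\lambda_i h_i+(t-\lambda_i)h_i)=t-t\sum_i h_i\equiv 0$.
\end{itemize}
All three differences are therefore divisible by $\mu_A$. Applying $\Phi_A$ gives $P_iP_j=\delta_{ij}P_i$, $\sum_i P_i=I_n$, and \eqref{spec.dec}. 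Indeed, the third fact is just $A=A\sum_i P_i=\sum_i(\lambda_iP_i+(A-\lambda_i)P_i)$. Membership $P_i,N_i\in\C[A]$ is immediate from the definition.

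Since everything lies in the commutative algebra $\C[A]$ and $P_i$ is idempotent, we get $N_i^k=(A-\lambda_i)^kP_i^k=(A-\lambda_i)^kP_i$ for $k\ge1$. For nilpotency, $(t-\lambda_i)^{m_i}h_i$ is divisible by $(t-\lambda_i)^{m_i}$. It is also divisible by $(t-\lambda_j)^{m_j}$ for $j\neq i$, because $h_i\equiv0$ there. Hence $\mu_A$ divides it and $N_i^{m_i}=0$.

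The only step needing some care is showing that the order is exactly $m_i$, i.e.\ $N_i^{m_i-1}\ne0$. Suppose instead that $(A-\lambda_i)^{m_i-1}h_i(A)=0$. Then $\mu_A\mid (t-\lambda_i)^{m_i-1}h_i$. Looking at the $(t-\lambda_i)$-adic valuation, $h_i\equiv1\bmod (t-\lambda_i)^{m_i}$ gives $h_i(\lambda_i)=1$, so $t-\lambda_i\nmid h_i$. Thus the valuation of $(t-\lambda_i)^{m_i-1}h_i$ at $\lambda_i$ is $m_i-1<m_i$, a contradiction. (If $m_i=1$, the convention $N_i^0=P_i\neq0$ holds for the same reason: $\mu_A\nmid h_i$.) This shows $N_i$ is nilpotent of order exactly $m_i$, completing the proof.
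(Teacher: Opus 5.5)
Your proof is correct and follows essentially the same route as the paper. You reduce each identity to a congruence modulo $\mu_A$, using $h_i\equiv\delta_{ij} \bmod (t-\lambda_j)^{m_j}$ and the pairwise coprimality of the factors, evaluate at $A$, and get the exact nilpotency order from $h_i(\lambda_i)=1$ through the $(t-\lambda_i)$-adic valuation, as in Lemma~\ref{lem.anh}; the only difference is that you verify the congruences of Proposition~\ref{proprieteshi} directly rather than citing \cite{BT1}, which makes your argument self-contained.
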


The proof is given in the next section. The uniqueness of the decomposition \eqref{spec.dec} is established in Corollary \ref{cor:uniqueness} below.
 
 \smallskip 
 
The matrices $P_i$ are called the \emph{spectral projectors} of $A$,\footnote{The term \emph{Frobenius covariants} is also used in the literature for these projectors; see, for example, \cite{Rinehart}.} while the matrices $N_i$ are the corresponding \emph{nilpotent components}. Since they all belong to the commutative algebra $\C[A]$, they commute pairwise:
\begin{equation}\label{commutativity}
 [P_i,P_j]=[P_i,N_j]=[N_i,N_j]=0,
\end{equation}
where we use the notation $[S,T] = ST-TS$. The theorem also implies that $N_iP_j = \delta_{ij}N_i$, and  $N_iN_j = 0$ if $i\neq j$ 

\smallskip 

For the proof, we will use the  following result on the polynomials defined in \eqref{defhi}:
\begin{proposition}\label{proprieteshi}
The polynomial $h_i$ satisfies the following congruence modulo $(t-\lambda_j)^{m_j}$\emph{:}
\begin{equation}\label{eq:duality}
 h_i\equiv \delta_{ij}\  \bmod{(t-\lambda_j)^{m_j}}.
\end{equation}
Moreover, the following congruences modulo $\mu_A$ hold: 
\begin{equation}\label{eq:idempotent-relations}
  h_i^2\equiv h_i \quad (i=1,\ldots,r),
  \qquad
  h_ih_j\equiv 0 \quad (i\ne j),
  \qquad
  \sum_{i=1}^r h_i\equiv 1.
\end{equation}
\end{proposition}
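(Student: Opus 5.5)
First we prove the congruence \eqref{eq:duality} directly from the definition \eqref{defhi}. Put
$$q_i(t)=\prod_{j\ne i}\left(\frac{t-\lambda_j}{\lambda_i-\lambda_j}\right)^{m_j},$$
so that $h_i=1-(1-q_i)^{m_i}$. Two cases are needed.

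For $j\ne i$, the factor $(t-\lambda_j)^{m_j}$ divides $q_i$, so $q_i\equiv 0$ modulo $(t-\lambda_j)^{m_j}$. Hence $(1-q_i)^{m_i}\equiv 1$, and therefore $h_i\equiv 0$.

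For $j=i$, the polynomial $1-q_i$ vanishes at $t=\lambda_i$, because $q_i(\lambda_i)=1$. So $(t-\lambda_i)$ divides $1-q_i$, and then $(t-\lambda_i)^{m_i}$ divides $(1-q_i)^{m_i}=1-h_i$. This gives $h_i\equiv 1$ modulo $(t-\lambda_i)^{m_i}$.

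This step is entirely elementary. The only point to check is that $q_i(\lambda_i)=1$, which holds because each factor equals $1$ at $t=\lambda_i$.

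Next we deduce \eqref{eq:idempotent-relations} using the Chinese remainder principle. The factors $(t-\lambda_j)^{m_j}$ are pairwise coprime, since the $\lambda_j$ are distinct. Consequently a polynomial $f$ is divisible by $\mu_A=\prod_j (t-\lambda_j)^{m_j}$ as soon as it is divisible by each $(t-\lambda_j)^{m_j}$. To keep the argument self-contained, this can be justified by unique factorization in $\C[t]$: the linear factors $t-\lambda_j$ are distinct primes, and $f$ contains each to power at least $m_j$.

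It therefore suffices to verify each relation modulo $(t-\lambda_k)^{m_k}$ for every $k$, and congruences are compatible with products and sums.
\begin{itemize}
\item By \eqref{eq:duality}, $h_i^2-h_i\equiv \delta_{ik}^2-\delta_{ik}=0$.
\item For $i\ne j$, $h_ih_j\equiv\delta_{ik}\delta_{jk}=0$.
\item $\sum_i h_i-1\equiv\sum_i\delta_{ik}-1=0$.
\end{itemize}
Each difference is thus divisible by every $(t-\lambda_k)^{m_k}$, hence by $\mu_A$.

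I expect no real obstacle here. The only step that deserves explicit justification is the passage from divisibility by the individual pairwise coprime prime powers to divisibility by their product $\mu_A$, and I will state it through unique factorization as above.
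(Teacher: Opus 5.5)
Your proof is correct and complete. The paper gives no argument for this proposition in the text and simply cites \cite[Proposition~2.1 and Corollary~2.3]{BT1}. Your proof is a valid self-contained substitute for that citation. It has two steps. First, you read off the local congruences \eqref{eq:duality} from $q_i\equiv 0 \bmod (t-\lambda_j)^{m_j}$ for $j\ne i$ and from $q_i(\lambda_i)=1$. Second, you use the Chinese remainder principle, via unique factorization, to pass to congruences modulo $\mu_A$. These are the same elementary facts the paper uses in its proof of Lemma~\ref{lem.anh}: $q_i$ divides $h_i$, and $h_i(\lambda_i)=1$.

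Your argument also never uses the fact that the ground field is $\C$. It relies only on unique factorization in $L[t]$, on the $\lambda_j$ being distinct, and on the differences $\lambda_i-\lambda_j$ being invertible. It therefore also justifies applying the proposition over the splitting field $L$, possibly of positive characteristic, as the paper does in Section~\ref{JordanChevalley}.
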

The proof is given in \cite[Proposition~2.1 and Corollary~2.3]{BT1}.  We will also need the following elementary lemma.
\begin{lemma}\label{lem.anh}
For every $i$ and every integer $k\ge0$, the polynomial $(t-\lambda_i)^k h_i(t)$  is divisible by $\mu_A$ if and only if $k\ge m_i$. In particular, for $k\ge m_i$ it is an annihilating polynomial of $A$.
\end{lemma}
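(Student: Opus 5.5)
Since the factors $(t-\lambda_j)^{m_j}$ are pairwise coprime, divisibility of a polynomial by $\mu_A$ is equivalent to its divisibility by each $(t-\lambda_j)^{m_j}$ separately. So we will test $q_k(t)=(t-\lambda_i)^k h_i(t)$ against each prime-power factor, using the congruence \eqref{eq:duality} of Proposition~\ref{proprieteshi}.

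\emph{Factors with $j\ne i$.} Here $h_i\equiv 0 \bmod (t-\lambda_j)^{m_j}$, so $(t-\lambda_j)^{m_j}$ divides $h_i$, hence divides $q_k$ for every $k\ge 0$. These factors impose no condition on $k$.

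\emph{The factor with $j=i$.} Here $h_i\equiv 1 \bmod (t-\lambda_i)^{m_i}$. Write $h_i = 1 + (t-\lambda_i)^{m_i} g$ with $g\in\C[t]$. Then
$$q_k = (t-\lambda_i)^k + (t-\lambda_i)^{k+m_i} g .$$
The second term is always divisible by $(t-\lambda_i)^{m_i}$. Hence $(t-\lambda_i)^{m_i}$ divides $q_k$ if and only if it divides $(t-\lambda_i)^k$, that is, if and only if $k\ge m_i$. Put differently, $h_i(\lambda_i)=1\neq 0$, so the multiplicity of $\lambda_i$ as a root of $q_k$ is exactly $k$, and divisibility requires $k\ge m_i$.

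Combining the two cases, $\mu_A$ divides $q_k$ if and only if $k\ge m_i$. The final assertion then follows from $\ker(\Phi_A)=(\mu_A)$. When $k\ge m_i$ we have $q_k\in\ker(\Phi_A)$, and $q_k\neq 0$ because $h_i\neq 0$ (its value at $\lambda_i$ is $1$). So $q_k$ is an annihilating polynomial of $A$.

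There is no real obstacle here. The only point needing care is that coprimality is what reduces divisibility by $\mu_A$ to divisibility by each factor, and this is a standard consequence of Bézout or of unique factorization in $\C[t]$.
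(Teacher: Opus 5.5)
Your proof is correct. The converse direction is the same as the paper's: since $h_i(\lambda_i)=1$, the root $\lambda_i$ of $(t-\lambda_i)^k h_i$ has multiplicity exactly $k$, so divisibility by $\mu_A$ fails for $k<m_i$.

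The forward direction is organized differently. The paper reads it directly off the closed formula \eqref{defhi}. With $q_i=\prod_{j\ne i}\bigl(\frac{t-\lambda_j}{\lambda_i-\lambda_j}\bigr)^{m_j}$, the expression $h_i=1-(1-q_i)^{m_i}$ has no constant term as a polynomial in $q_i$, so $h_i$ is a multiple of $q_i$. Hence $(t-\lambda_i)^{m_i}h_i$ is visibly a multiple of $\mu_A$, without appealing to Proposition~\ref{proprieteshi} or to coprimality. You instead take the congruences \eqref{eq:duality} as input and reduce divisibility by $\mu_A$ to divisibility by each prime-power factor.

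The paper's route is self-contained given \eqref{defhi}. Yours shows that the lemma is a formal consequence of \eqref{eq:duality} alone. It therefore applies verbatim to any polynomials satisfying those congruences, for instance the remainders of the $h_i$ modulo $\mu_A$ mentioned right after the lemma.
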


\begin{proof}
Set $q_i(t)=\prod_{j\ne i}\left(\frac{t-\lambda_j}{\lambda_i-\lambda_j}\right)^{m_j}$ and  observe that 
$$
 h_i = 1-(1-q_i)^{m_i}=q_i\theta_i,
$$
for some polynomial $\theta_i$. Therefore $(t-\lambda_i)^{k}h_i(t)$ is   divisible by
$\mu_A=(t-\lambda_i)^{m_i}q_i(t)$ whenever $k\ge m_i$.

To prove the converse direction, we observe that $h_i(\lambda_i)=1-(1-q_i(\lambda_i))^{m_i}=1$ since  $q_i(\lambda_i)=1$. It follows that  $(t-\lambda_i)$ does not divide $h_i$; therefore $(t-\lambda_i)^k h_i(t)$ vanishes  at $\lambda_i$  with multiplicity exactly $k$ and is thus not a multiple of $\mu_A$ if  $k<m_i$.
\\
\end{proof}

Note that each polynomial $h_i$ may be replaced by its remainder modulo
$\mu_A$. This reduces its degree to less than $\deg(\mu_A)$, while preserving
Proposition~\ref{proprieteshi} and Lemma~\ref{lem.anh}, which are the only
properties of $h_i$ used in the sequel. This reduction is advantageous from
a computational viewpoint, since the degree of $h_i$ as defined by
\eqref{defhi} can be significantly larger than $\deg(\mu_A)$.

\subsection{Proof of Theorem \ref{thm:spectral-decomp}}
By definition \eqref{defNP},  we have $P_i=h_i(A)$  and $N_i=(A-\lambda_i)P_i$, so that Proposition~\ref{proprieteshi} implies 
$$
 P_iP_j=(h_ih_j)(A)=\delta_{ij}P_i \quad \text{and} \quad 
 \sum_{i=1}^rP_i = \sum_{i=1}^rh_i(A) =\left(\sum_{i=1}^rh_i\right)(A)=I_n.
$$
Moreover $AP_i =\lambda_iP_i + N_i$; we thus obtain the following decomposition for $A$:
$$
 A=A\sum_{i=1}^rP_i=\sum_{i=1}^rAP_i =\sum_{i=1}^r(\lambda_iP_i+N_i).
$$
Since $P_i^k = P_i$ for every $k\ge 1$, we also have
$$
 N_i^k=\bigl((A-\lambda_iI_n)h_i(A)\bigr)^k = (A-\lambda_iI_n)^kP_i^k = (A-\lambda_iI_n)^kP_i.
$$
Finally, Lemma~\ref{lem.anh} shows that $(t-\lambda_i)^kh_i(t)$ is an annihilating polynomial of $A$ if and only if $k\ge m_i$. Hence $N_i^k=\bigl((t-\lambda_i)^kh_i\bigr)(A)$ vanishes if and only if $k\ge m_i$, i.e. $N_i$ is nilpotent of order exactly $m_i$. This completes the proof of  Theorem~\ref{thm:spectral-decomp}. 

\hfill  \qedsymbol
 
\medskip

\begin{remark}
The spectral decomposition \eqref{spec.dec} is a classical result; see, for example, \cite[Chapter~VII, \S1]{DunfordSchwartz}. Verde-Star \cite{VerdeStar} also derives it using Hermite interpolation, but with interpolation polynomials constructed recursively via divided differences. Our approach yields the explicit closed-form formula \eqref{defNP} for the spectral projectors and nilpotent components, based directly on the polynomials $h_i$.
\end{remark}


\section{Applications of the spectral decomposition}\label{sec:consequences}

In this section, we show how the spectral resolution of
Theorem~\ref{thm:spectral-decomp} provides a unified and direct approach to
several classical results of linear algebra, including the Primary
Decomposition Theorem, the Cayley--Hamilton Theorem, criteria for
diagonalizability and the spectral theorem for normal matrices.
This reverses the usual order of presentation found in
many textbooks, where the Primary Decomposition Theorem is established first
and the spectral projectors are introduced later (or never).

The only part of the classical theory not recovered by this approach
is the structure of the nilpotent components. Its description
requires the construction of Jordan chains and leads to the Jordan canonical
form.

\subsection{Primary decomposition}

For each root $\lambda_i$ of $\mu_A$, we define the associated  \emph{generalized eigenspace} (also called the \emph{characteristic subspace}) as
$$
 U_i=\ker(A-\lambda_iI)^{m_i} \subset K^n,
$$
\begin{corollary}[Primary Decomposition Theorem]\label{cor:primary-decomposition}
We have $\im(P_i)=U_i$ for every $i$ and
\begin{equation}\label{primdec}
\C^n=U_1\oplus\cdots\oplus U_r.
\end{equation}
\end{corollary}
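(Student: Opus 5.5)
We first show $\im(P_i)=U_i$ by proving the two inclusions separately, and then derive \eqref{primdec} from the relations \eqref{propNP} of Theorem~\ref{thm:spectral-decomp}.

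\emph{Inclusion $\im(P_i)\subset U_i$.} Let $v=P_iw$. Then
$$
(A-\lambda_iI_n)^{m_i}v=(A-\lambda_iI_n)^{m_i}P_iw=N_i^{m_i}w=0
$$
by \eqref{propNP} and the nilpotency of $N_i$ of order $m_i$. Equivalently, we may invoke Lemma~\ref{lem.anh}: $(t-\lambda_i)^{m_i}h_i$ annihilates $A$. Hence $v\in U_i$.

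\emph{Inclusion $U_i\subset\im(P_i)$.} Let $v\in U_i$. Since $\sum_j P_j=I_n$, we have $v=\sum_j P_jv$, so it suffices to show $P_jv=0$ for $j\neq i$; then $v=P_iv\in\im(P_i)$. By the congruence \eqref{eq:duality}, $h_j\equiv 0 \bmod (t-\lambda_i)^{m_i}$ for $j\ne i$, so $h_j=g_j\,(t-\lambda_i)^{m_i}$ for some polynomial $g_j$. Evaluating at $A$ gives
$$
P_jv=g_j(A)(A-\lambda_iI_n)^{m_i}v=0 .
$$
This step is the only one requiring care, since it is where the duality congruence is really used; it is nonetheless a one-line argument.

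\emph{Direct sum.} The identity $\sum_iP_i=I_n$ gives $\C^n=\sum_i\im(P_i)=\sum_iU_i$. For directness, suppose $\sum_i u_i=0$ with $u_i\in U_i=\im(P_i)$, and write $u_i=P_iw_i$. Applying $P_j$ and using $P_jP_i=\delta_{ij}P_i$ yields
$$
0=P_j\sum_i u_i=P_jw_j=u_j
$$
for every $j$. Hence the sum is direct, which establishes \eqref{primdec}.
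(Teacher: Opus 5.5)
Your proof is correct and follows essentially the same route as the paper. The inclusion $\im(P_i)\subseteq U_i$ comes from $N_i^{m_i}=0$, and the reverse inclusion from the congruence \eqref{eq:duality} together with $\sum_j P_j=I_n$. The only difference is that you write out the directness of the sum from $P_iP_j=\delta_{ij}P_i$, which the paper states as immediate.
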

In particular a basis of each  generalized eigenspace $U_i$ is obtained by choosing $d_i$ linearly independent columns of the matrix $P_i$.

\begin{proof}
Using $\sum_{j=1}^rP_j=I$ and $P_iP_j=0$ for $i\ne j$, we immediately obtain
$$K^n=\im(P_1)\oplus\cdots\oplus\im(P_r).$$
It remains to prove that $\im(P_j)=U_j$. Since $N_j^{m_j}=(A-\lambda_jI)^{m_j}P_j=0$,
we have $\im(P_j)\subseteq U_j$.

Conversely, let $x\in U_j=\ker(A-\lambda_jI)^{m_j}$. Since
$h_i\equiv\delta_{ij}\ \bmod{(t-\lambda_j)^{m_j}}$, we can write
$h_i(t)-\delta_{ij}=(t-\lambda_j)^{m_j}\theta(t)$ for some polynomial $\theta$, and therefore
$$
 P_ix-\delta_{ij}x=(A-\lambda_jI)^{m_j}\theta(A)\,x
 = \theta(A)(A-\lambda_jI)^{m_j}\,x =0,
$$
since $x\in U_j$. Hence $P_ix=\delta_{ij}x$ for every $i$, and
$$
 x=\sum_iP_ix=P_jx,
$$
so $x\in\im(P_j)$. It follows that $\im(P_j)=U_j$, proving \eqref{primdec}.
\end{proof}

\begin{corollary}
The eigenvalues of $A$ are precisely the roots of $\mu_A$ and the corresponding eigenspaces are 
$$
  E_{\lambda_i} =\ker(A-\lambda_iI) =\ker( N_i)  \cap U_i.
$$
\end{corollary}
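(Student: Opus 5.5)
We prove the two claims separately: that each root $\lambda_i$ of $\mu_A$ is an eigenvalue, and that every eigenvalue is such a root. Then we identify the eigenspace.

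\emph{Every eigenvalue is a root of $\mu_A$.} Let $Ax=\lambda x$ with $x\neq 0$. For any polynomial $p$ we have $p(A)x=p(\lambda)x$. Taking $p=\mu_A$ gives $0=\mu_A(A)x=\mu_A(\lambda)x$, hence $\mu_A(\lambda)=0$ and $\lambda=\lambda_i$ for some $i$.

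\emph{Every root is an eigenvalue.} By Theorem~\ref{thm:spectral-decomp}, $N_i$ is nilpotent of order exactly $m_i\ge 1$. Hence $N_i^{m_i-1}=(A-\lambda_iI)^{m_i-1}P_i\neq 0$; when $m_i=1$ this should be read as $P_i\neq 0$. Choose $y$ with $v:=N_i^{m_i-1}y\neq 0$. Then
$$(A-\lambda_iI)v=(A-\lambda_iI)^{m_i}P_iy=N_i^{m_i}y=0,$$
so $v$ is an eigenvector for $\lambda_i$. Alternatively, Lemma~\ref{lem.anh} with $k=m_i-1$ gives a nonzero $(A-\lambda_iI)^{m_i-1}h_i(A)$ directly.

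\emph{The eigenspace.} We show $E_{\lambda_i}=\ker(N_i)\cap U_i$.

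For $\subseteq$: if $(A-\lambda_iI)x=0$, then clearly $x\in U_i=\ker(A-\lambda_iI)^{m_i}$, since $m_i\ge1$. Moreover $N_ix=(A-\lambda_iI)P_ix=P_i(A-\lambda_iI)x=0$, because everything lies in the commutative algebra $\C[A]$.

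For $\supseteq$: let $x\in U_i$ with $N_ix=0$. By Corollary~\ref{cor:primary-decomposition}, $U_i=\im(P_i)$, and $P_i$ is idempotent, so $P_ix=x$. Therefore
$$(A-\lambda_iI)x=(A-\lambda_iI)P_ix=N_ix=0.$$

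No step is a real obstacle. The only point needing care is the nonvanishing of $N_i^{m_i-1}$, which is exactly the "order exactly $m_i$" part of Theorem~\ref{thm:spectral-decomp}, including the degenerate case $m_i=1$ where it reduces to $P_i\neq0$.
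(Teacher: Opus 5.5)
Your proof is correct. The eigenspace identity follows essentially the paper's argument. The paper notes that $P_i$ acts as the identity on $U_i=\im(P_i)$, so $N_i$ and $A-\lambda_iI$ coincide on $U_i$, and then uses $\ker(A-\lambda_iI)\subseteq U_i$. Your inclusion $\supseteq$ is exactly this; for $\subseteq$ you use commutativity in $\C[A]$ instead of the restriction identity, which is only a cosmetic difference.

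You genuinely go beyond the paper on the first assertion. The paper's proof never shows that the eigenvalues of $A$ are precisely the roots of $\mu_A$; it only proves the eigenspace formula. Your two extra steps supply this:
\begin{enumerate}[(i)]
\item $p(A)x=p(\lambda)x$ with $p=\mu_A$ shows that every eigenvalue is a root of $\mu_A$;
\item the nonzero vector $(A-\lambda_iI)^{m_i-1}P_iy$ is an eigenvector for $\lambda_i$, so every root is an eigenvalue.
\end{enumerate}
You are right to justify the nonvanishing in (ii) through Lemma~\ref{lem.anh} with $k=m_i-1\ge 0$, rather than through the phrase ``nilpotent of order exactly $m_i$''. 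For $m_i=1$ that phrase only says $N_i=0$ and does not by itself give $P_i\neq 0$, whereas Lemma~\ref{lem.anh} with $k=0$ does.

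So the paper's version is a two-line restriction argument that takes the eigenvalue characterization for granted. Yours costs a few more lines but proves the whole statement.
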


\begin{proof}
Since $N_i=(A-\lambda_iI)P_i$ and $P_i$ acts as the identity on $U_i$, we have
$$
N_i|_{U_i}=(A-\lambda_iI)|_{U_i}.
$$
Hence
$$
\ker(N_i)\cap U_i =\ker(A-\lambda_iI)\cap U_i =\ker(A-\lambda_iI),
$$
because $\ker(A-\lambda_iI)\subseteq U_i$.
\end{proof}

 \begin{corollary}\label{cor:uniqueness}
The decomposition \eqref{spec.dec} in Theorem~\ref{thm:spectral-decomp} is unique.
\end{corollary}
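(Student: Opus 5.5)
We have to pin down what "unique" should mean. Suppose $A=\sum_{i=1}^r(\lambda_iQ_i+M_i)$, where the $\lambda_i$ are the distinct roots of $\mu_A$. The $Q_i$ are matrices with $Q_iQ_j=\delta_{ij}Q_i$ and $\sum_i Q_i=I_n$. The $M_i$ are nilpotent, pairwise commute with the $Q_j$, and satisfy $M_i=Q_iM_iQ_i$, i.e. $M_iQ_j=\delta_{ij}M_i$. Under these conditions the plan is to show $Q_i=P_i$ and $M_i=N_i$. Once the projectors agree, the nilpotents agree automatically, since $AQ_i=\lambda_iQ_i+M_i$ forces $M_i=(A-\lambda_iI)Q_i$. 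So everything reduces to identifying $\im(Q_i)$ and $\ker(Q_i)$.

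First, multiplying the decomposition by $Q_i$ gives $AQ_i=\lambda_iQ_i+M_i$, so $(A-\lambda_iI)Q_i=M_i$. Since $A$ commutes with every $Q_j$ (because $A$ is a combination of the $Q_j$ and $M_j$, which commute with $Q_i$), we get $(A-\lambda_iI)^kQ_i=M_i^kQ_i$ for all $k$. Take $k$ at least the order of nilpotency of $M_i$; this power vanishes, and $k=n$ always suffices. Hence $\im(Q_i)\subseteq\ker(A-\lambda_iI)^n$.

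Second, we show $\ker(A-\lambda_iI)^n=U_i$. By Corollary~\ref{cor:primary-decomposition}, $A-\lambda_iI$ is invertible on $U_j$ for $j\ne i$. Indeed, on $U_j$ we have $A-\lambda_iI=(\lambda_j-\lambda_i)I+(A-\lambda_jI)$, a nonzero scalar plus a nilpotent. So any vector killed by $(A-\lambda_iI)^n$ has zero components in $U_j$ for $j\ne i$. Hence $\im(Q_i)\subseteq U_i=\im(P_i)$ for every $i$.

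Finally, both families give direct sum decompositions of $\C^n$. One is $\C^n=\bigoplus_i\im(Q_i)$, from the $Q_i$ being complementary idempotents; the other is $\C^n=\bigoplus_i U_i$. Together with the inclusions $\im(Q_i)\subseteq U_i$, a dimension count forces $\im(Q_i)=U_i$. Then $Q_i$ and $P_i$ are both the projector onto $U_i$ along $\bigoplus_{j\ne i}U_j$, so $Q_i=P_i$. The main obstacle is really the formulation: fixing which commutation and support hypotheses on $(Q_i,M_i)$ are needed. In particular, the step $(A-\lambda_iI)^kQ_i=M_i^kQ_i$ requires $[A,Q_i]=0$, which I would derive from the $Q$'s and $M$'s commuting. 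Without such hypotheses uniqueness fails trivially, e.g. $Q_1=I$, $M_1=A-\lambda_1I$ is not nilpotent in general.
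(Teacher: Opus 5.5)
Your proof is correct, and it ends at the same characterization the paper uses: $P_i$ is the projector onto $U_i$ along $\bigoplus_{j\ne i}U_j$ (Corollary~\ref{cor:primary-decomposition}), and $N_i=(A-\lambda_iI)P_i$. The difference is one of completeness.

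The paper's proof stops at the observation that $U_i$, hence $P_i$, depends only on $A$. It never writes down a competing decomposition. The step that carries the actual content of uniqueness is therefore left implicit: that the idempotents $Q_i$ of \emph{any} decomposition of the form \eqref{spec.dec} must also have image $U_i$.

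You make the class of competing decompositions explicit and prove that step. From $[A,Q_i]=0$ you get $(A-\lambda_iI)^kQ_i=M_i^k=0$ for $k$ large. Invertibility of $A-\lambda_iI$ on $U_j$ for $j\ne i$ then gives $\im(Q_i)\subseteq U_i$, and the dimension count gives equality. Since you assume only commutation and support relations, rather than $Q_i,M_i\in\C[A]$, you actually obtain a slightly stronger uniqueness statement than the formulation of Theorem~\ref{thm:spectral-decomp} suggests.

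Two minor remarks. First, $k=n$ is unnecessary: $(A-\lambda_iI)^k$ is invertible on each $U_j$, $j\ne i$, for every $k$, so any $k$ with $M_i^k=0$ will do. Second, your closing example satisfies all your commutation and support conditions and fails only nilpotency, so it shows the role of nilpotency, not of those conditions. An example of the latter is $A=\mathrm{diag}(1,2)$, $Q_1=\begin{pmatrix}1&a\\0&0\end{pmatrix}$, $Q_2=I-Q_1$, $M_1=0$, $M_2=\begin{pmatrix}0&a\\0&0\end{pmatrix}$ with $a\ne0$. Here $M_2Q_j=\delta_{2j}M_2$, but $Q_2M_2=0\ne M_2$, and $Q_1\ne P_1$.
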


\begin{proof}
By Corollary~\ref{cor:primary-decomposition}, $P_i$ is the projection onto $U_i$ along the decomposition \eqref{primdec}. Since the generalized eigenspace $U_i$ depend only on $A$, so do the projectors $P_i$. Moreover, we have $N_i=(A-\lambda_iI)P_i$.  This proves uniqueness.  
\end{proof}

\begin{remark}
Using spectral projectors in the proof of the Primary Decomposition Theorem is not a new idea; see for example \cite[Chapter~6, Theorem~12]{HoffmanKunze}. In this reference, the projectors are constructed by means of Bézout identities, and the Primary Decomposition Theorem is seen as the main structural result, rather than a consequence of the spectral resolution.
\end{remark}

\subsection{The Cayley--Hamilton theorem}

\begin{corollary}\label{cor:eigenvalues-CH}
The minimal polynomial $\mu_A$ divides the characteristic polynomial
$\chi_A(t)=\det(tI-A)$. In particular, $\chi_A(A)=0$.
\end{corollary}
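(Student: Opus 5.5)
The plan is to use the Primary Decomposition Theorem (Corollary~\ref{cor:primary-decomposition}) to block-diagonalize $A$, and then compute $\chi_A$ blockwise. Set $d_i=\dim U_i$. Choose a basis of $\C^n$ adapted to $\C^n=U_1\oplus\cdots\oplus U_r$. Each $U_i$ is $A$-invariant, since $U_i=\im(P_i)$ and $P_i\in\C[A]$ commutes with $A$. So in this basis $A$ is similar to a block-diagonal matrix $\operatorname{diag}(A_1,\dots,A_r)$, where $A_i$ is the matrix of $A|_{U_i}$. Hence
$$
\chi_A(t)=\prod_{i=1}^r\det(tI_{d_i}-A_i).
$$

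Next I will show $\det(tI_{d_i}-A_i)=(t-\lambda_i)^{d_i}$. On $U_i$ the operator $A_i-\lambda_i I$ is nilpotent, because $(A-\lambda_iI)^{m_i}$ vanishes on $U_i$ by definition of $U_i$. A nilpotent operator $M$ has only the eigenvalue $0$: if $Mx=\mu x$ with $x\neq0$, then $0=M^{m}x=\mu^m x$, so $\mu=0$. Its characteristic polynomial, which splits over $\C$, is therefore $t^{d_i}$. Shifting by $\lambda_i$ gives $\det(tI_{d_i}-A_i)=(t-\lambda_i)^{d_i}$, and hence
$$
\chi_A(t)=\prod_{i=1}^r(t-\lambda_i)^{d_i}.
$$

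The key step, and the only real content, is to show $d_i\ge m_i$. By Theorem~\ref{thm:spectral-decomp}, $N_i$ is nilpotent of order exactly $m_i$. Moreover $N_i=(A-\lambda_iI)P_i$ vanishes on $U_j$ for $j\ne i$ and restricts to $A_i-\lambda_iI$ on $U_i$. So $N_i^{m_i-1}\neq0$ forces $(A_i-\lambda_iI)^{m_i-1}\neq0$ on $U_i$. For a nilpotent operator $M$ on a $d$-dimensional space, the kernels $\ker M^k$ strictly increase until they stabilize, so the chain reaches the whole space in at most $d$ steps and $M^d=0$. Applied to $M=A_i-\lambda_iI$ on $U_i$, this gives $m_i\le d_i$. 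Therefore $\mu_A=\prod(t-\lambda_i)^{m_i}$ divides $\chi_A$.

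The consequence $\chi_A(A)=0$ is then immediate, since $\ker(\Phi_A)=(\mu_A)$. The same computation also shows that the roots of $\chi_A$ are exactly the $\lambda_i$. I expect no serious obstacle. The only care needed is in the justification that a nilpotent operator on a $d$-dimensional space satisfies $M^d=0$, which the strictly increasing kernel chain argument handles.
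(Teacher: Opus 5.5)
Your proof is correct and takes essentially the paper's route: block-diagonalize $A$ via the primary decomposition, show that each block has characteristic polynomial $(t-\lambda_i)^{d_i}$, and use that $N_i$ has order exactly $m_i$ to get $m_i\le d_i$. Beyond the paper, you justify $m_i\le d_i$ via the kernel-chain bound for nilpotent operators, which the paper only asserts. One spot needs to be made explicit, the case $m_i=1$: with the convention $N_i^0=I$ the condition ``$N_i^{0}\neq0$'' says nothing, and what you actually need is $U_i\neq0$, which follows from $P_i=h_i(A)\neq0$, i.e.\ Lemma~\ref{lem.anh} with $k=0$.
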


\begin{proof}
By Corollary~\ref{cor:primary-decomposition}, in a basis adapted to the decomposition $\C^n=U_1\oplus\cdots\oplus U_r$, the matrix $A$ is similar to the block diagonal matrix $A_1\oplus\cdots\oplus A_r$, where $A_i\in M_{d_i}(\C)$ is the block corresponding to $U_i$. 
Since $N_i$ is nilpotent of order exactly $m_i$ by Theorem~\ref{thm:spectral-decomp}, the minimal polynomial of $A_i$ is
$(t-\lambda_i)^{m_i}$, so $A_i$ has only one eigenvalue $\lambda_i$ and
$$\chi_{A_i}(t)=(t-\lambda_i)^{d_i}, \qquad d_i=\dim U_i,\quad m_i\le d_i.$$
Therefore
$$\chi_A(t)=\prod_i\chi_{A_i}(t)=\prod_i(t-\lambda_i)^{d_i},$$
which is divisible by $\mu_A(t)=\prod_i(t-\lambda_i)^{m_i}$.
Hence $\chi_A(A)=0$.
\end{proof}

\smallskip

Note that the above proof shows that the algebraic multiplicity $d_i$ of the eigenvalue $\lambda_i$ in the characteristic polynomial $\chi_A(t)$ is equal to $\mathrm{rank}(P_i) = \dim(U_i)$. 

\subsection{Diagonalizable matrices} \label{sec.diagonalizable}

Recall that a matrix $A\in M_n(\C)$ is \textit{diagonalizable} if there exists a basis of $\C^n$ consisting of eigenvectors. Equivalently, $\C^n$ is the direct sum of its eigenspaces.

\begin{corollary}\label{cor:diagonalizable}
Let $N_i$ and $N=\sum_{i=1}^r N_i$ be the nilpotent components of Theorem~\ref{thm:spectral-decomp}.
The following conditions are equivalent:
\begin{enumerate}[(i)]
\item $A$ is diagonalizable;
\item $m_i=1$ for every $i$   (i.e.\ $\mu_A$ has only simple roots);
\item $N_i=0$ for every $i$;
\item $N=\sum_i N_i =0$.
\end{enumerate}
\end{corollary}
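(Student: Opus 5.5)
I will prove the equivalences through the chain (ii)$\Leftrightarrow$(iii)$\Leftrightarrow$(iv) together with (i)$\Leftrightarrow$(iii). Everything rests on Theorem~\ref{thm:spectral-decomp}, the Primary Decomposition (Corollary~\ref{cor:primary-decomposition}) and the description of the eigenspaces given just after it.

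\emph{(ii)$\Leftrightarrow$(iii).} By Theorem~\ref{thm:spectral-decomp}, $N_i$ is nilpotent of order exactly $m_i$. So $N_i=0$ holds if and only if $m_i=1$. (Here I read $m_i$ as the exact order of nilpotence; the case $m_i=1$ gives $N_i^1=0$.)

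\emph{(iii)$\Leftrightarrow$(iv).} One direction is trivial. For the converse, I multiply $N=\sum_j N_j$ by $P_i$. Since $N_jP_i=\delta_{ij}N_i$, as noted after Theorem~\ref{thm:spectral-decomp}, this gives $NP_i=N_i$. Hence $N=0$ forces every $N_i=0$.

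\emph{(i)$\Leftrightarrow$(iii).} If all $N_i=0$, then $N_i^1=(A-\lambda_iI)P_i=0$. Thus $\im(P_i)=U_i\subseteq\ker(A-\lambda_iI)=E_{\lambda_i}$, so $U_i=E_{\lambda_i}$. The Primary Decomposition then gives $\C^n=\bigoplus_i E_{\lambda_i}$, which is diagonalizability.

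Conversely, suppose $\C^n$ is the direct sum of eigenspaces. By the corollary after the Primary Decomposition, every eigenvalue is some $\lambda_i$, and $E_{\lambda_i}\subseteq U_i$. The sum $\bigoplus E_{\lambda_i}$ is already all of $\C^n=\bigoplus U_i$, so dimension counting gives $E_{\lambda_i}=U_i$. Then $(A-\lambda_iI)$ kills $\im P_i$, so $N_i=(A-\lambda_iI)P_i=0$.

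An alternative route to (i)$\Rightarrow$(ii) avoids dimension counting: $\prod_i(t-\lambda_i)$ annihilates $A$ on a basis of eigenvectors, so $\mu_A$ divides it and each $m_i=1$.

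The only mildly delicate step is the converse direction (i)$\Rightarrow$(iii). There one must make sure the eigenspaces fill each $U_i$, and the dimension comparison $E_{\lambda_i}\subseteq U_i$ with equal total sums handles this. Everything else is immediate from the stated properties.
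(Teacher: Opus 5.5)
Your proof is correct and follows essentially the same route as the paper: (ii)$\Leftrightarrow$(iii) from the exact nilpotency order of $N_i$, (iv)$\Rightarrow$(iii) via $N_i=NP_i$, and diagonalizability reduced through the Primary Decomposition to the condition $U_i=E_{\lambda_i}$ for all $i$. The only difference is that you link (i) to (iii), using $N_i=(A-\lambda_iI)P_i$ and $\im(P_i)=U_i$, rather than linking (i) directly to (ii); this makes explicit the step the paper compresses into ``which holds if and only if $m_i=1$''.
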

Observe that condition (iii), together with Theorem \ref{thm:spectral-decomp}, tells us that $A$ is diagonalizable if and only if the spectral decomposition reduces to $A=\sum_{i=1}^r\lambda_iP_i$.

\begin{proof}
By the primary decomposition \eqref{primdec}, $A$ is diagonalizable if and only if
$U_i=\ker(A-\lambda_iI)^{m_i}$ coincides with the eigenspace $\ker(A-\lambda_iI)$
for every $i$, which holds if and only if $m_i=1$ for all $i$. This gives (i) $\Leftrightarrow$ (ii).

The equivalence (ii) $\Leftrightarrow$ (iii) follows from the fact that $N_i$ is nilpotent of order $m_i$.

If $N=0$, then $N_i=NP_i=0$ for every $i$; this shows that (iv) 
$\Rightarrow$ (iii). Finally (iii) $\Rightarrow$ (iv) is immediate.
\end{proof}

 \medskip 

\begin{remark} 
When $A$ is diagonalizable, we have $m_i = 1$ for each $i$ and the polynomials $h_i$ reduce to the \textit{Lagrange polynomials}
\begin{equation}\label{eq:Lagrange}
  \ell_i(t)=\prod_{j\ne i}\frac{t-\lambda_j}{\lambda_i-\lambda_j},
\end{equation}
In that case, the spectral decomposition of $A$ takes the classical \textit{Sylvester formula} $A=\sum_{i=1}^r\lambda_i\ell_i(A)$.
\end{remark}

\subsection{Simultaneous diagonalisation} \label{sec.simultane}

If $A', A''\in M_n(\C)$ are commuting diagonalizable matrices with spectral decompositions
$$A'=\sum_{i=1}^r \lambda'_i P_i, \qquad A''=\sum_{j=1}^s \lambda''_j Q_j,$$
then the projectors $P_i$ and $Q_j$ commute, since they are polynomials in $A'$ and $A''$
respectively, and $[A',A''] = 0$. The matrices $R_{ij}=P_iQ_j = Q_jP_i$  are then 
projectors satisfying\footnote{Note that we do not claim here that the $R_{ij}$ are all non zero.}
$$R_{ij}^2=R_{ij}, \qquad R_{ij}R_{i'j'}=0 \ \text{ if }\ (i',j')\ne(i,j), \qquad
\sum_{i,j}R_{ij}=I,$$
and we have
$$
 A'=\sum_{i,j}\lambda'_i R_{ij}, \qquad A''=\sum_{i,j}\lambda''_j R_{ij}.
$$
 This shows that $A'$ and $A''$ are simultaneously diagonalizable: any basis
obtained by choosing an arbitrary basis in each nonzero subspace $\im(R_{ij})$ is a common eigenbasis for both matrices.

\subsection{The case of normal matrices}
We now recover the spectral theorem for normal complex matrices from Theorem \ref{thm:spectral-decomp}. Recall that the \textit{adjoint} of a matrix $A\in M_n(\C)$ is the matrix $A^*=\bar{A}^\top$. It is the unique matrix such that 
$$
 \langle A x,y\rangle =  \langle x,A^*y\rangle 
$$
for all $x,y\in \C^n$, where $ \langle x,y\rangle  = \sum_i x_i \overline{y}_i$ is the standard Hermitian scalar product in $\C^n$. The matrix $A$ is \emph{normal} if $[A,A^*]=0$; it is selfadjoint if $A^* = A$.  We first recall some elementary properties of normal matrices.

\begin{lemma}\label{lem:normal-basic-v2}
\begin{enumerate}[(i)]
\item $A\in M_n(\C)$ is normal if and only if $\|Ax\|=\|A^*x\|$ for all $x\in\C^n$.
\item Every normal nilpotent matrix is zero.
\item Every normal projector is selfadjoint.
\end{enumerate}
\end{lemma}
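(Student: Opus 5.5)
Each of the three items will be proved with elementary inner-product identities and nothing from the spectral decomposition.

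For (i), I expand
$$\|Ax\|^2-\|A^*x\|^2=\langle A^*Ax,x\rangle-\langle AA^*x,x\rangle=\langle [A^*,A]x,x\rangle .$$
If $A$ is normal, this vanishes for every $x$. Conversely, suppose $\|Ax\|=\|A^*x\|$ for all $x$. Then the selfadjoint matrix $H=[A^*,A]$ satisfies $\langle Hx,x\rangle=0$ for all $x\in\C^n$. By the polarization identity, $\langle Hx,y\rangle$ is a combination of the quantities $\langle H(x+i^ky),x+i^ky\rangle$, $k=0,\dots,3$, so $\langle Hx,y\rangle=0$ for all $x,y$ and hence $H=0$. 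This step is the only place requiring a little care: one must use the complex polarization identity. Since $H$ is selfadjoint, one could alternatively note that its eigenvalues are $\langle Hv,v\rangle/\|v\|^2=0$ and that a selfadjoint matrix is diagonalizable. That route would be circular at this point in the paper, so I will use polarization.

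For (ii), I first show that for normal $A$ one has $\ker A=\ker A^*$, which follows directly from (i). I also note that $A^k$ is normal for every $k$, since $A$ and $A^*$ commute. Let $m$ be the nilpotency order of $A$ and suppose $m\ge 2$. For any $x$, the vector $y=A^{m-1}x$ satisfies $Ay=0$, so $A^*y=0$ by $\ker A=\ker A^*$. Then
$$\|A^{m-1}x\|^2=\langle A^{m-1}x,A^{m-1}x\rangle=\langle A^{m-2}x,A^*A^{m-1}x\rangle=0,$$
so $A^{m-1}=0$. This contradicts the minimality of $m$. Hence $m\le 1$, that is, $A=0$.

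For (iii), let $P$ be a normal projector. By (i), $\ker P=\ker P^*$, and $\ker P^*=(\im P)^\perp$ holds in general. So $\ker P=(\im P)^\perp$, and the decomposition $\C^n=\im P\oplus\ker P$ is orthogonal, which makes $P$ the orthogonal projection. Concretely, for all $x,y$ write $x=Px+(x-Px)$ with $x-Px\in\ker P$. Then
$$\langle Px,y\rangle=\langle Px,Py\rangle=\langle x,Py\rangle,$$
using $y-Py\perp\im P$ and $x-Px\perp\im P$. This gives $P^*=P$.
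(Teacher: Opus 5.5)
Your proof is correct. Parts (i) and (ii) follow the paper's argument: the same commutator identity, and the same combination of $\ker A=\ker A^*$ with the minimal nilpotency order. Your remark that $A^k$ is normal is never used and can be dropped. In (i) you also spell out the polarization step that the converse direction needs, which the paper leaves implicit in ``follows from the identity''; this is a worthwhile addition.

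In (iii) both proofs start from $\ker P=\ker P^*$, but they finish differently. The paper stays with operator identities. It obtains $P^*(I-P)=0$ and $P(I-P^*)=0$, because $(I-P)x\in\ker P=\ker P^*$ and $(I-P^*)x\in\ker P^*=\ker P$. It then invokes normality a second time to conclude $P^*-P=P^*P-PP^*=0$. You instead combine $\ker P=\ker P^*$ with the general fact $\ker P^*=(\im P)^\perp$. This makes $\C^n=\im P\oplus\ker P$ an orthogonal splitting, and you then check $\langle Px,y\rangle=\langle x,Py\rangle$ directly.

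Your version uses normality only through $\ker P=\ker P^*$. It also makes the geometric content explicit: $P$ is the orthogonal projection onto its image, which is what the orthonormal eigenbasis in the spectral theorem rests on. The paper's version is a short algebraic computation that needs no further inner-product reasoning once (i) is available.
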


\begin{proof}
Statement (i) follows from the identity $\|A^*x\|^2-\|Ax\|^2 =\langle[A,A^*]x,x\rangle$. 

For (ii), suppose $N\ne0$ is both normal and nilpotent, and let $m\ge2$ be minimal with $N^m=0$. By (i), we have $\ker (N) =\ker (N^*)$, so $\im (N^{m-1})\subseteq\ker (N^*)$ and $N^*N^{m-1}=0$. Then 
$$
  \|N^{m-1}x\|^2=\langle N^*N^{m-1}x,N^{m-2}x\rangle=0
$$
for every $x$, contradicting the minimality of $m$.

For (iii), let $P^2=P$ be normal. By (i), we have  $\ker(P)=\ker (P^*)$, thus $\ker\left(  P^*(I-P)\right)  = \ker\left(  P(I-P) \right) = \C^n$, that is $P^*(I-P) =0$. Similarly  $P(I-P^*)=0$, therefore 
$$
 P^* - P = P^*P - PP^* = 0.
$$
\end{proof}

\begin{theorem}[Spectral theorem for normal matrices]\label{thm:normal-spectral}
Let $A\in M_n(\C)$ be normal. Then $A=\sum_{i=1}^r\lambda_iP_i$,
where $\lambda_1,\ldots,\lambda_r$ are the distinct eigenvalues of $A$ and the projectors satisfy
$$P_i=P_i^*, \qquad P_iP_j=\delta_{ij}P_i, \qquad \sum_{i=1}^rP_i=I.$$
Moreover $P_i=\ell_i(A)$, where $\ell_i$ is the Lagrange polynomial \eqref{eq:Lagrange},
and $\C^n$ admits an orthonormal basis of eigenvectors of $A$.
\end{theorem}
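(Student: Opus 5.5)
The plan is to start from the spectral decomposition of Theorem~\ref{thm:spectral-decomp}, $A=\sum_i(\lambda_iP_i+N_i)$ with $P_i=h_i(A)$ and $N_i=(A-\lambda_iI)h_i(A)$, and to use normality to kill the nilpotent parts and make the projectors selfadjoint. The key observation is that $A^*$ commutes with $A$, hence with every polynomial in $A$, and in particular with every element of $\C[A]$. Moreover, for any polynomial $p$ we have $p(A)^*=\bar p(A^*)$, where $\bar p$ has conjugated coefficients. Consequently, for $X,Y\in\C[A]$ we get $[X,Y^*]=0$, since $Y^*$ is a polynomial in $A^*$ and $A^*$ commutes with $A$. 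So every element of $\C[A]$ is normal. In particular each $P_i$ and each $N_i$ is normal.

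Next I apply Lemma~\ref{lem:normal-basic-v2}. By part~(ii), each $N_i$, being normal and nilpotent, vanishes. Corollary~\ref{cor:diagonalizable} then gives $m_i=1$ for all $i$, so that $\mu_A=\prod_i(t-\lambda_i)$ and $A=\sum_i\lambda_iP_i$. By part~(iii), each $P_i$ is a normal projector, hence $P_i=P_i^*$. The relations $P_iP_j=\delta_{ij}P_i$ and $\sum_iP_i=I$ are already in \eqref{propNP}. The $\lambda_i$ are exactly the eigenvalues of $A$ by the corollary following the Primary Decomposition Theorem. Since all $m_i=1$, the polynomials $h_i$ reduce to the Lagrange polynomials $\ell_i$ (Remark after Corollary~\ref{cor:diagonalizable}), so $P_i=\ell_i(A)$.

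For the orthonormal eigenbasis, I use that $P_i$ is a selfadjoint idempotent, so for $i\ne j$
$$\langle P_ix,P_jy\rangle=\langle P_jP_ix,y\rangle=0.$$
Thus the spaces $\im(P_i)$ are mutually orthogonal. By Corollary~\ref{cor:primary-decomposition} together with $m_i=1$, they equal the eigenspaces $\ker(A-\lambda_iI)$, and they sum directly to $\C^n$. Choosing an orthonormal basis of each $\im(P_i)$ by Gram--Schmidt and concatenating them gives an orthonormal eigenbasis.

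The only step needing care is the claim that elements of $\C[A]$ are normal, specifically that $X^*$ commutes with $X$ for $X=p(A)$. This reduces to $[A,A^*]=0$, which propagates to arbitrary polynomials in $A$ and $A^*$. Everything else is direct citation of earlier results.
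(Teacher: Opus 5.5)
Your proof is correct and follows the paper's argument. Every element of $\C[A]$ is normal, so Lemma~\ref{lem:normal-basic-v2}(ii) kills the $N_i$, Corollary~\ref{cor:diagonalizable} gives $m_i=1$ and hence $h_i=\ell_i$, and Lemma~\ref{lem:normal-basic-v2}(iii) gives $P_i=P_i^*$ and orthogonal images; you only add a little more detail, namely why $p(A)$ is normal and the computation $\langle P_ix,P_jy\rangle=0$.
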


\begin{proof}
By Theorem~\ref{thm:spectral-decomp}, $A=\sum_i(\lambda_iP_i+N_i)$.
Since $A$ is normal, every polynomial in $A$ is normal; in particular each $N_i$ is normal
and nilpotent, so $N_i=0$ by Lemma~\ref{lem:normal-basic-v2}. As explained in \S \ref{sec.diagonalizable}, this implies that $A$ is diagonalizable and we have  $A=\sum_i\lambda_iP_i$. Moreover, the projectors are given by the Lagrange interpolation polynomials:
$$
 P_i = h_i(A) = \ell_i(A),
$$
since each $m_i = 1$. Also, each spectral projector $P_i\in\C[A]$ is normal,
hence $P_i=P_i^*$ by Lemma~\ref{lem:normal-basic-v2}. 
The subspaces $\im(P_i)$ are therefore pairwise orthogonal, and choosing an orthonormal basis
in each of them yields an orthonormal basis for the eigenvectors of $A$.
\end{proof}

\medskip 

\begin{remark}
Theorem \ref{thm:normal-spectral} extends to the case of matrices with coefficients in a   field $(K,*)$ equipped with a \emph{positive involution}, that is, an automorphism $*$ of
order at most $2$, and such that $\sum_i x_i^* x_i = 0 \Leftrightarrow x_i = 0$ for each $i$. Such fields are classified by the Artin-Schreier Theory; we refer to \cite{KMRT,Scharlau} for a systematic treatment. The main examples are $\R$ where $*$ is the identity,  and $\C$ where $*$ is the complex
conjugation, together with all their subfields. In this general setting, the adjoint of a matrix $A\in M_n(K)$ is defined
by $(A^*)_{ij} = (A_{ji})^*$,  and the statement and proof of  Theorem~\ref{thm:normal-spectral}  carries through verbatim, assuming the characteristic polynomial of $A$ splits over $K$. This
covers in particular the case of real symmetric matrices, since in that case the 
characteristic polynomial always splits over $\R$.
\end{remark}

\subsection{Application to matrix functional calculus}
Matrix functions are a classical topic in matrix analysis; see \cite{Rinehart} for a historical account up to the 1950s.
See also \cite[Chapter~VII]{DunfordSchwartz} for the functional analytic viewpoint, and \cite{Higham} for a modern
treatment oriented toward numerical analysis.
In this section, we derive a direct formula for $g(A)$ from the spectral decomposition, thereby avoiding the reduction to the Jordan canonical form.

\smallskip

Let $g$ be a function defined on an open neighborhood of the spectrum
$\{\lambda_1,\dots,\lambda_r\}$ of $A\in M_n(\C)$, with derivatives up to
order $m_i-1$ at each $\lambda_i$.\footnote{In particular, $g$ may be taken
analytic, but analyticity is not required.} For each $i$, let
\begin{equation}\label{defgi}
  g_i(t)=\sum_{k=0}^{m_i-1}\frac{g^{(k)}(\lambda_i)}{k!}(t-\lambda_i)^k
\end{equation}
be the Taylor polynomial of order $m_i-1$ of $g$ at $\lambda_i$.

By the Hermite interpolation formula of Theorem~3.1 in \cite{BT1}, the unique
polynomial $f$ of degree less than $\deg(\mu_A)$ satisfying
$$f(t)\equiv g_i(t)\ \bmod{(t-\lambda_i)^{m_i}},\qquad i=1,\dots,r,$$
is the remainder modulo $\mu_A$ of $F(t)=\sum_{i=1}^r g_i(t)h_i(t)$.
Since $\mu_A(A)=0$, this leads to the following definition:
\begin{equation}\label{def.gA}
  g(A):=F(A)=\sum_{i=1}^r g_i(A)h_i(A).
\end{equation}

The following formula expresses $g(A)$ in terms of the spectral decomposition.

\begin{proposition}\label{prop.calcfonc}
With the above notation, we have 
\begin{equation}\label{def.gAbis}
  g(A)=\sum_{i=1}^r\sum_{k=0}^{m_i-1}\frac{g^{(k)}(\lambda_i)}{k!}N_i^kP_i,
\end{equation}
where $A=\sum_{i=1}^r(\lambda_iP_i+N_i)$ is the spectral decomposition of
Theorem~\ref{thm:spectral-decomp}, with the convention $N_i^0=I$.
\end{proposition}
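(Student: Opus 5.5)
The plan is to start from the definition \eqref{def.gA}, $g(A)=\sum_{i=1}^r g_i(A)h_i(A)=\sum_i g_i(A)P_i$, and expand each Taylor polynomial $g_i$ from \eqref{defgi}. Since evaluation $\Phi_A$ is an algebra homomorphism, we get
$$
g_i(A)P_i=\sum_{k=0}^{m_i-1}\frac{g^{(k)}(\lambda_i)}{k!}(A-\lambda_iI_n)^kP_i .
$$
The whole argument then reduces to identifying $(A-\lambda_iI_n)^kP_i$ with $N_i^kP_i$ for each $0\le k\le m_i-1$.

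For $k\ge1$, Theorem~\ref{thm:spectral-decomp} gives $N_i^k=(A-\lambda_iI_n)^kP_i$. Since $P_i^2=P_i$ and all these matrices lie in the commutative algebra $\C[A]$, we get $N_i^kP_i=(A-\lambda_iI_n)^kP_i^2=(A-\lambda_iI_n)^kP_i$. For $k=0$, the convention $N_i^0=I$ gives $N_i^0P_i=P_i=(A-\lambda_iI_n)^0P_i$. Substituting these identities into the expansion term by term yields \eqref{def.gAbis}. In fact $N_i^kP_i=N_i^k$ for $k\ge1$, so the factor $P_i$ matters only for the $k=0$ term.

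No step is a real obstacle; the only point needing care is the $k=0$ term, where the convention $N_i^0=I$ forces the explicit factor $P_i$. It is also worth checking that \eqref{def.gA} does not depend on whether $h_i$ is replaced by its remainder modulo $\mu_A$. This holds because $\mu_A(A)=0$, so both choices give the same $P_i$. As a consistency check, the truncation of the sum at $k=m_i-1$ loses nothing, since $N_i^{m_i}=0$ by Theorem~\ref{thm:spectral-decomp}.
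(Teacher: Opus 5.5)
Your proposal is correct and follows essentially the same route as the paper: you expand $g_i(A)$ from the Taylor polynomial and use $h_i(A)=P_i$, then identify $(A-\lambda_iI)^kP_i$ with $N_i^kP_i$, using the convention $N_i^0=I$ for $k=0$ and Theorem~\ref{thm:spectral-decomp} together with $P_i^2=P_i$ for $k\ge1$. Your extra remarks about reducing $h_i$ modulo $\mu_A$ and about the truncation at $k=m_i-1$ are harmless consistency checks that the argument does not need.
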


\begin{proof}
From \eqref{defgi} we have
$$
g_i(A)=\sum_{k=0}^{m_i-1}\frac{g^{(k)}(\lambda_i)}{k!}(A-\lambda_iI)^k.
$$
Moreover, $h_i(A)=P_i$ and
$$
(A-\lambda_iI)^kP_i=N_i^kP_i
$$
for every $k\ge0$. Indeed, for $k=0$, both sides are equal to $P_i$, while for $k\ge1$, Theorem~\ref{thm:spectral-decomp} gives
$$
(A-\lambda_iI)^kP_i=N_i^k,
$$
and  $N_i^k=N_i^kP_i$. Substituting these identities into \eqref{def.gA} gives \eqref{def.gAbis}.
\end{proof}

\medskip

The following examples illustrate Proposition~\ref{prop.calcfonc}.
\begin{enumerate}[(1)]
\item The matrix exponential satisfies
$$\mathrm{e}^{sA}=\sum_{i=1}^r\sum_{k=0}^{m_i-1}
  \frac{s^k}{k!}\mathrm{e}^{s\lambda_i}N_i^kP_i.$$

\item For any $\alpha\notin\{\lambda_1,\dots,\lambda_r\}$, the resolvent is
$$(\alpha I-A)^{-1}=\sum_{i=1}^r\sum_{k=0}^{m_i-1}
  \frac{1}{(\alpha-\lambda_i)^{k+1}}N_i^kP_i.$$

\item If $A$ has no eigenvalue in $(-\infty,0]$, then using the principal
branch of the logarithm, we have
$$
 \log(A)=\sum_{i=1}^r\left(\log(\lambda_i)
 +\sum_{k=1}^{m_i-1}\frac{(-1)^{k-1}}{k\lambda_i^k}N_i^k\right)P_i.
$$

\item If $\alpha>0$ and $A$ has no eigenvalue in $(-\infty,0]$, then, using
the principal branch of $z^\alpha$,
$$
A^\alpha=\sum_{i=1}^r\sum_{k=0}^{m_i-1}
\binom{\alpha}{k}\lambda_i^{\alpha-k}N_i^kP_i,
$$
where $\displaystyle\binom{\alpha}{k} =\frac{1}{k!}\alpha(\alpha-1)\cdots(\alpha-k+1)$.
If $\alpha\in\mathbb N$, this formula holds without restriction on the eigenvalues. 
If $\alpha=\frac12$, we obtain the following formula for the principal square root of $A$:
$$
\sqrt{A}=\sum_{i=1}^r\sum_{k=0}^{m_i-1}\binom{1/2}{k}\lambda_i^{\frac12-k}N_i^kP_i
=\sum_{i=1}^r\sqrt{\lambda_i}\left(I+\frac{N_i}{2\lambda_i}-\frac{N_i^2}{8\lambda_i^2}
+\frac{N_i^3}{16\lambda_i^3}-\cdots\right)P_i.
$$
\end{enumerate}

\medskip 

Using Example~(2), one recovers the Cauchy integral formula for the holomorphic functional calculus. 
Namely, if $g$ is holomorphic on a domain $D\subset\C$ with rectifiable boundary
containing $\{\lambda_1,\ldots,\lambda_r\}$, then
\begin{equation}\label{holgA}
  g(A)=\frac{1}{2\pi i}\int_{\partial D}g(z)(zI-A)^{-1}\,dz.
\end{equation}
Indeed, substituting the expression for the resolvent from Example~(2) and applying Cauchy's integral formula for holomorphic functions, we have
\begin{align*}
\frac{1}{2\pi i}\int_{\partial D}g(z)(zI-A)^{-1}\,dz
&=\sum_{i=1}^r\sum_{k=0}^{m_i-1}
  \frac{1}{2\pi i}\int_{\partial D}
  \frac{g(z)}{(z-\lambda_i)^{k+1}}\,dz\;N_i^kP_i\\
&=\sum_{i=1}^r\sum_{k=0}^{m_i-1}
  \frac{g^{(k)}(\lambda_i)}{k!}\,N_i^kP_i,
\end{align*}
which coincides with \eqref{def.gAbis}.

 \medskip

\begin{remark} \ We give some historical comment; see  \cite{Rinehart} and \cite[\S 1.10]{Higham} for more information. 

(i) \
Matrix functions are often introduced via the Jordan canonical form. A less common approach is based on Hermite interpolation; see, for example, \cite[Definition~1.4]{Higham}. Verde-Star \cite{VerdeStar} also defines $g(A)$ by Hermite interpolation, using divided differences; see \cite[formula~(4.7)]{VerdeStar}.
The present approach is in the same spirit, but is based on the  polynomials \eqref{defhi} and their fundamental properties. 

(ii) \ 
Formula \eqref{def.gAbis} is classical and appears in various forms in the literature. 
According to Rinehart, versions of this formula already appear in the work of H. Schwerdtfeger (1935) and H.  Richter (1951);  see formulas (2.12) and (2.16) in \cite{Rinehart}. It also appears in \cite[Chapter~VII, \S1]{DunfordSchwartz}. 
In the diagonalizable case, it goes back to Sylvester (1883).
The distinctive feature of the present approach is the explicit construction of the spectral projectors $P_i=h_i(A)$
and nilpotent components $N_i=(A-\lambda_iI)P_i$ based on the polynomials \eqref{defhi}, which makes the whole construction 
direct and well suited to explicit computations, as shown in the examples above.

(iii) \
The Cauchy integral formula \eqref{holgA} is attributed to E.~Cartan
(1928) by Rinehart \cite{Rinehart}. It became the starting point of the
holomorphic functional calculus developed by Dunford and Schwartz in
\cite[Chapter~VII]{DunfordSchwartz}.
\end{remark}


\section{The Jordan--Chevalley decomposition}\label{JordanChevalley}
The preceding sections are written over $\C$. However, the same construction works over an arbitrary field $K$ whenever the minimal polynomial splits over $K$, that is, if $\mu_A(t)=\prod_i(t-\lambda_i)^{m_i}$ with each $\lambda_i\in K$. Then the same polynomials $h_i\in K[t]$ give the same spectral resolution
$A=\sum_i(\lambda_iP_i+N_i)$.

When the minimal polynomial \emph{does not} split over $K$, the individual
spectral projectors are  still defined over the splitting field of $\mu_A$. Nevertheless, if $K$ is perfect, the polynomial producing their weighted sum
$\sum_i\lambda_iP_i$ has all its coefficients in $K$. This yields the semisimple part in the Jordan--Chevalley decomposition over the ground field.
\begin{theorem}[Jordan--Chevalley decomposition]\label{thm:jordan-chevalley}
Let $K$ be a perfect field and $A\in M_n(K)$. Then there exists a unique decomposition
$$A=S+N,$$
where $S,N\in K[A]$, $S$ is semisimple, and $N$ is nilpotent.
\end{theorem}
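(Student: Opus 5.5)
The plan is to pass to a splitting field, apply the spectral decomposition there, and use Galois invariance to descend to $K$. Let $L\supseteq K$ be a splitting field of $\mu_A$; since $K$ is perfect, $L/K$ is a Galois extension. Over $L$ write $\mu_A(t)=\prod_{i=1}^r(t-\lambda_i)^{m_i}$ with distinct $\lambda_i\in L$. As noted at the start of this section, Theorem~\ref{thm:spectral-decomp} holds verbatim over $L$. This gives $A=\sum_i(\lambda_iP_i+N_i)$ with $P_i=h_i(A)$ and $h_i\in L[t]$. We set
$$
 s(t)=\sum_{i=1}^r\lambda_i h_i(t)\in L[t],\qquad S=s(A),\qquad N=A-S=\sum_i N_i .
$$
Then $N$ is nilpotent, because the $N_i$ commute and are nilpotent. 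Also $S=\sum_i\lambda_iP_i$ is diagonalizable over $L$, since $\C^n$ (here $L^n$) is the direct sum of the $\im(P_i)$ by Corollary~\ref{cor:primary-decomposition}. Consequently the minimal polynomial of $S$ divides $\prod_i(t-\lambda_i)$, which is squarefree, so $S$ is semisimple, and this will remain true over $K$.

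The main step, and the one I expect to be the real obstacle, is showing that $S\in K[A]$. I will first replace $s$ by its remainder $\bar s$ modulo $\mu_A\in K[t]$. Fix $\sigma\in\mathrm{Gal}(L/K)$, acting on coefficients. Since $\mu_A$ has coefficients in $K$, $\sigma$ permutes the roots $\lambda_i$ and preserves their multiplicities: if $\sigma(\lambda_i)=\lambda_{\pi(i)}$, then $m_{\pi(i)}=m_i$. Applying $\sigma$ to the explicit formula \eqref{defhi} then gives $\sigma(h_i)=h_{\pi(i)}$, and hence $\sigma(s)=\sum_i\lambda_{\pi(i)}h_{\pi(i)}=s$. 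Because $\mu_A\in K[t]$, Euclidean division commutes with $\sigma$, so $\sigma(\bar s)=\bar s$ as well. Galois invariance ($L^{\mathrm{Gal}}=K$) then gives $\bar s\in K[t]$, and therefore $S=\bar s(A)\in K[A]$ and $N=A-S\in K[A]$. Here perfectness is used to make $L/K$ Galois, i.e.\ separable, so that fixed elements lie in $K$.

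For uniqueness, suppose $A=S'+N'$ is another such decomposition with $S',N'\in K[A]$. All four matrices lie in the commutative algebra $K[A]$, so $S-S'=N'-N$ is nilpotent. Moreover $S$ and $S'$ are commuting semisimple matrices; over $L$ they are diagonalizable (semisimplicity over a perfect field means the minimal polynomial is separable). By the simultaneous diagonalization argument of \S\ref{sec.simultane}, $S-S'$ is diagonalizable over $L$. A matrix that is both diagonalizable and nilpotent is zero, so $S=S'$ and hence $N=N'$. If the intended uniqueness is among all commuting decompositions rather than those inside $K[A]$, I would instead note that $S'$ and $N'$ commute with $A$ and therefore with $S=\bar s(A)$, and then run the same argument.
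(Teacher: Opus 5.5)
Your proof is correct and follows the paper's argument essentially step by step. It uses the same polynomial $\rho=\sum_i\lambda_i h_i$ and the same Galois-invariance step; you use $\mathrm{Gal}(L/K)$ for the splitting field instead of the $K$-automorphisms of an algebraic closure, and your reduction modulo $\mu_A$ is harmless but unnecessary, since $\sigma(s)=s$ already forces $s\in K[t]$. It also uses the same uniqueness argument via simultaneous diagonalization of the commuting matrices $S,S'$.

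One point is worth making explicit: why $S'$ is diagonalizable over $L$ itself. Since $S'=q(A)$ with $q\in K[t]$, its eigenvalues are the $q(\lambda_i)\in L$, so its separable minimal polynomial splits over $L$; alternatively, you can simply work over an algebraic closure.
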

Note that $S$ and $N$ commute, since they are polynomials in $A$.
Before proving this theorem, we shall review the needed definitions and  basic properties.

\subsection{On perfect fields}\label{subsec:perfect}

We recall a few standard notions from field theory; see, for example, \cite{Lang,Milne}.

A \emph{splitting field} of a polynomial $p\in K[t]$ is a field extension $L$ of $K$ in
which $p$ splits into linear factors and which is generated over $K$ by the roots of $p$. It always exists and is unique up to isomorphism.

A polynomial $p\in K[t]$ is \emph{separable} if it has only simple roots in its splitting
field; equivalently, $p$ and its formal derivative $p'$ are coprime.

A field $K$ is \emph{perfect} if every irreducible polynomial in $K[t]$ is separable.
Every field of characteristic zero is perfect; a field of characteristic $p>0$ is perfect
if and only if every element admits a $p$-th root in $K$. In particular every finite field
is perfect.

We shall use the following standard consequence of Galois theory, proved in \cite[Theorem~9.29]{Milne}.

\begin{theorem}\label{th.galois-perfect}
Let $\Omega$ be an algebraically closed field containing a perfect subfield $K$. If
$\alpha\in\Omega$ satisfies $\sigma(\alpha)=\alpha$ for every automorphism
$\sigma:\Omega\to\Omega$ fixing $K$, then $\alpha\in K$.
\end{theorem}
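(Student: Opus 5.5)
The plan is to argue by contraposition. If $\alpha\notin K$, we construct an automorphism $\sigma$ of $\Omega$ fixing $K$ with $\sigma(\alpha)\neq\alpha$. Two cases arise, depending on whether $\alpha$ is transcendental or algebraic over $K$. Both rely on one extension tool: if $F_1,F_2\subset\Omega$ are subfields over which $\Omega$ is algebraic, then any isomorphism $\tau:F_1\to F_2$ extends to an automorphism of $\Omega$.

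\emph{The extension tool.} Since $\Omega$ is algebraically closed and algebraic over $F_1$, it is an algebraic closure of $F_1$. By the isomorphism extension theorem for algebraic closures (Zorn's lemma), $\tau$ extends to an embedding $\tilde\tau:\Omega\to\Omega$. The image $\tilde\tau(\Omega)$ is algebraically closed and contains $F_2$. Every element of $\Omega$ is algebraic over $F_2$, so its minimal polynomial over $F_2$ already splits in $\tilde\tau(\Omega)$. Hence $\tilde\tau(\Omega)=\Omega$, and $\tilde\tau$ is surjective. I expect this surjectivity point, which is what makes the argument work for an arbitrarily large $\Omega$, to be the main technical step.

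\emph{Transcendental case.} Suppose $\alpha$ is transcendental over $K$. Choose a transcendence basis $B$ of $\Omega$ over $K$ containing $\alpha$, so that $K(B)$ is purely transcendental and $\Omega$ is algebraic over $K(B)$. The substitution $\alpha\mapsto\alpha+1$, with the other elements of $B$ fixed, defines a $K$-automorphism of $K(B)$. By the extension tool it extends to an automorphism $\sigma$ of $\Omega$ fixing $K$ with $\sigma(\alpha)=\alpha+1\neq\alpha$, contradicting the hypothesis. Hence $\alpha$ is algebraic over $K$.

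\emph{Algebraic case.} Let $f\in K[t]$ be the minimal polynomial of $\alpha$. This is the only place perfectness is used: since $K$ is perfect, the irreducible polynomial $f$ is separable, so it has $\deg f$ distinct roots in $\Omega$. If $\deg f\ge 2$, pick a root $\beta\neq\alpha$. There is a $K$-isomorphism $K(\alpha)\to K(\beta)$ sending $\alpha\mapsto\beta$, both fields being isomorphic to $K[t]/(f)$.

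To extend it, take a transcendence basis $B$ of $\Omega$ over $K$. It remains a transcendence basis over $K(\alpha)$ and over $K(\beta)$, since both are algebraic over $K$. So the isomorphism extends to $K(\alpha)(B)\to K(\beta)(B)$ by fixing $B$. The extension tool then produces $\sigma\in\operatorname{Aut}(\Omega/K)$ with $\sigma(\alpha)=\beta\neq\alpha$, again a contradiction. Therefore $\deg f=1$, that is, $\alpha\in K$.
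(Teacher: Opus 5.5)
Your proof is correct and complete: the extension lemma with its surjectivity check, the transcendental case via $\alpha\mapsto\alpha+1$ on a transcendence basis, and the algebraic case, where separability supplies a conjugate $\beta\neq\alpha$, are all sound. The paper gives no proof of its own and only cites Milne's Theorem~9.29, for which your argument is the standard proof; in the paper's only application $\Omega$ is an algebraic closure of $K$, so your transcendental case is vacuous and $B=\emptyset$.
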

\subsection{A key interpolation polynomial}
Let $A\in M_n(K)$ and let $L$ be the splitting field of the minimal polynomial $\mu_A$. Over $L$, write $\mu_A(t)=\prod_{i=1}^r(t-\lambda_i)^{m_i}$ with $\lambda_i\in L$ pairwise distinct,
and let $h_i\in L[t]$ be the polynomials defined in \eqref{defhi}.
We define
\begin{equation}\label{defrho}
\rho(t)=\sum_{i=1}^r\lambda_i h_i(t), 
\end{equation}
By Proposition~\ref{proprieteshi}, we have
\begin{equation}\label{eq:rho-interpolation}
  \rho(t)\equiv\lambda_i\ \bmod{(t-\lambda_i)^{m_i}},\quad i=1,\ldots,r.
\end{equation}
The polynomial $\rho$ also has the following important property.
\begin{proposition}\label{prop:rho-in-K-v2}
If $K$ is perfect, then $\rho\in K[t]$.
\end{proposition}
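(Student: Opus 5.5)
The plan is a Galois invariance argument on the coefficients of $\rho$, using Theorem~\ref{th.galois-perfect}. Fix an algebraic closure $\Omega$ of $L$, hence of $K$. It suffices to show that every automorphism $\sigma$ of $\Omega$ fixing $K$ satisfies $\sigma(\rho)=\rho$, where $\sigma$ acts coefficientwise on $\Omega[t]$. Then every coefficient of $\rho$ lies in $K$.

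First, since $\mu_A\in K[t]$ (it is the minimal polynomial of a matrix over $K$, and the minimal polynomial does not change under field extension), $\sigma(\mu_A)=\mu_A$. Applying $\sigma$ to the factorization $\mu_A=\prod_i(t-\lambda_i)^{m_i}$ gives $\prod_i(t-\sigma(\lambda_i))^{m_i}$. By unique factorization in $\Omega[t]$, $\sigma$ permutes the roots: there is a permutation $\pi$ of $\{1,\dots,r\}$ with $\sigma(\lambda_i)=\lambda_{\pi(i)}$ and $m_{\pi(i)}=m_i$.

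Next, I apply $\sigma$ to the explicit formula \eqref{defhi}. Because $\sigma$ fixes $t$ and acts as a ring automorphism on coefficients,
$$\sigma(h_i)=1-\Bigl(1-\prod_{j\ne i}\Bigl(\frac{t-\lambda_{\pi(j)}}{\lambda_{\pi(i)}-\lambda_{\pi(j)}}\Bigr)^{m_{\pi(j)}}\Bigr)^{m_{\pi(i)}}=h_{\pi(i)}.$$
Here I reindex the product by $j'=\pi(j)$ and use $m_{\pi(j)}=m_j$. Therefore
$$\sigma(\rho)=\sum_i\sigma(\lambda_i)\,\sigma(h_i)=\sum_i\lambda_{\pi(i)}h_{\pi(i)}=\rho.$$

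The step that needs the most care is the invariance of the multiplicities, $m_{\pi(i)}=m_i$. Without it, $\sigma(h_i)$ would be a different polynomial from $h_{\pi(i)}$, and invariance would only hold modulo $\mu_A$. The unique factorization argument handles this directly. Perfectness enters only through Theorem~\ref{th.galois-perfect}, which converts invariance under all $K$-automorphisms of $\Omega$ into membership in $K$. Over an imperfect field the fixed field could be strictly larger than $K$, so that theorem is exactly what is needed.

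As an alternative route, one could replace $\rho$ by its remainder modulo $\mu_A$. Uniqueness of the Hermite interpolant of degree less than $\deg\mu_A$, together with the $\sigma$-invariance of the congruences \eqref{eq:rho-interpolation}, gives the same conclusion for that remainder. The statement concerns $\rho$ itself, however, so I will use the direct coefficientwise computation above.
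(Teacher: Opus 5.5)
Your proposal is correct and follows the paper's proof. In both, $\sigma$ permutes the roots of $\mu_A$ while preserving multiplicities, so $\sigma(h_i)=h_{\pi(i)}$ and $\sigma(\rho)=\rho$, and Theorem~\ref{th.galois-perfect} then places every coefficient of $\rho$ in $K$; your unique-factorization argument from $\sigma(\mu_A)=\mu_A$ simply makes explicit the multiplicity-preservation step that the paper asserts without comment.
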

\begin{proof}
Let $\Omega$ be the algebraic closure of $K$ and $\sigma$ an automorphism of $\Omega$
fixing $K$. Then $\sigma$ permutes the roots of $\mu_A$ preserving multiplicities;
writing $\sigma(\lambda_i)=\lambda_{\sigma(i)}$, formula \eqref{defhi}   gives
$\sigma(h_i)=h_{\sigma(i)}$. Therefore
$$\sigma(\rho)=\sum_i\sigma(\lambda_i)\sigma(h_i)=\sum_i\lambda_{\sigma(i)}h_{\sigma(i)}=\rho.$$
Every coefficient of $\rho$ is thus fixed by all $K$-automorphisms of $\Omega$, hence
belongs to $K$ by Theorem~\ref{th.galois-perfect}. Therefore $\rho\in K[t]$.

\end{proof}

\begin{remark}\label{rem:nonperfect-v2}  
The proposition fails for non perfect fields. Let $K=\mathbb{F}_p(\alpha)$ with $\alpha$
transcendental and $\mu(t)=t^p-\alpha$. If $\beta^p=\alpha$, then over $L=K(\beta)$
we have $\mu(t)=(t-\beta)^p$, in particular $\beta$ is the unique root in the splitting field of $\mu$. In this example, we thus have $h_1=1$ and $\rho=\beta\notin K[t]$.
\end{remark}

\subsection{Semisimple matrices}\label{subsec:semisimple}

A matrix $A\in M_n(K)$ is \emph{simple} if the only $A$-invariant subspaces
of $K^n$ are $\{0\}$ and $K^n$ itself. It is \emph{semisimple}\footnote{Equivalently,
$A$ is semisimple if and only if the quotient algebra $K[A]\cong K[t]/(\mu_A)$
is semisimple, that is a direct product of simple subalgebras. This  holds if and only if $\mu_A$
is separable.} if every $A$-invariant subspace of $K^n$ has an $A$-invariant
complement.

If $K$ is perfect, $A$ is semisimple if and only if its minimal polynomial is separable, equivalently, $A$ is diagonalizable over the
splitting field of $\mu_A$.

\begin{lemma}\label{SSnilpotentisnul}
If $A\in M_n(K)$ is both nilpotent and semisimple, then $A=0$.
\end{lemma}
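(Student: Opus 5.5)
The plan is to use only the definition of semisimplicity (every invariant subspace has an invariant complement), so that no perfectness assumption on $K$ is needed. This matches the statement of Lemma~\ref{SSnilpotentisnul}, which is made for an arbitrary field $K$.

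Suppose $A$ is nilpotent and semisimple, and assume for contradiction that $A\neq 0$. Consider the subspace $W=\ker(A)$. It is $A$-invariant, since $A$ maps it to $0\in W$. It is nonzero, because a nilpotent matrix is singular. By semisimplicity there is an $A$-invariant complement $W'$ with $K^n=W\oplus W'$.

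The key step is to show that $W'=\{0\}$. The restriction $A|_{W'}$ is injective, since $\ker(A|_{W'})=W'\cap W=\{0\}$. But $A|_{W'}$ is also nilpotent, because $A^m=0$ for some $m$ implies $(A|_{W'})^m=0$. An injective nilpotent endomorphism of a finite-dimensional space can only live on the zero space: if $W'\neq\{0\}$, then $(A|_{W'})^m$ would be injective and also zero, which is impossible. Hence $W'=\{0\}$, so $W=K^n$, that is, $\ker A=K^n$ and $A=0$. This contradicts the assumption and proves the lemma.

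There is no real obstacle here. The only point needing care is choosing the invariant subspace $\ker A$ rather than $\operatorname{im}A$, so that the complement inherits injectivity.

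An alternative route would go through the characterization recalled before the lemma: for perfect $K$, a semisimple matrix has separable $\mu_A$. Nilpotency forces $\mu_A=t^m$, and separability then forces $m=1$, giving $A=0$. That argument requires perfectness, so I would present the direct argument above instead, since it works over any field.
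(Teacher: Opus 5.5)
Your proof is correct and follows the paper's argument: the paper also uses the invariant subspace $\ker A$, and simply asserts that for $A\neq 0$ it is a proper invariant subspace with no invariant complement. Your argument that $A$ restricted to a complement $W'$ is both injective and nilpotent, forcing $W'=\{0\}$, supplies the justification the paper leaves implicit; like the paper's proof, it needs no perfectness assumption on $K$.
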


\begin{proof}
If $A\ne0$, then $\ker A\subset K^n$ is a proper invariant subspace with no invariant
complement.
\end{proof}

\begin{lemma}\label{lem:simultaneous-ss}
If $A,B\in M_n(K)$ are commuting semisimple matrices, then $A\pm B$ are semisimple.
\end{lemma}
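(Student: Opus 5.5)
The plan is to reduce to the diagonalizable case over a splitting field and then use the simultaneous diagonalization argument of \S\ref{sec.simultane}. Throughout, $K$ is taken perfect, as in the rest of this section, so that the characterization recalled above applies: a matrix in $M_n(K)$ is semisimple if and only if its minimal polynomial is separable, if and only if it is diagonalizable over the splitting field of its minimal polynomial.

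\emph{Step 1 (common splitting field).} Let $L$ be a splitting field of $\mu_A\mu_B$ over $K$. Since $A$ and $B$ are semisimple, $\mu_A$ and $\mu_B$ are separable. Both split over $L$, say with distinct roots $\lambda'_1,\dots,\lambda'_r$ and $\lambda''_1,\dots,\lambda''_s$ respectively. As noted at the beginning of \S\ref{JordanChevalley}, the spectral decomposition works verbatim over $L$ once the minimal polynomial splits. Because all $m_i=1$, Corollary~\ref{cor:diagonalizable} (over $L$) gives
$$A=\sum_i\lambda'_iP_i,\qquad B=\sum_j\lambda''_jQ_j,$$
where $P_i=\ell_i(A)$ and $Q_j$ are obtained from the Lagrange polynomials \eqref{eq:Lagrange}, which have coefficients in $L$.

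\emph{Step 2 (simultaneous diagonalization).} Since $[A,B]=0$ and $P_i,Q_j$ are polynomials in $A$ and $B$, the projectors commute. Setting $R_{ij}=P_iQ_j$, we obtain as in \S\ref{sec.simultane} a family of pairwise orthogonal idempotents with $\sum_{i,j}R_{ij}=I$ and
$$A\pm B=\sum_{i,j}(\lambda'_i\pm\lambda''_j)R_{ij}.$$
Let $c_1,\dots,c_q$ be the \emph{distinct} values among the $\lambda'_i\pm\lambda''_j$, and put $\pi(t)=\prod_k(t-c_k)\in L[t]$. Using $R_{ij}R_{i'j'}=\delta R_{ij}$, one checks that $\pi(A\pm B)=\sum_{i,j}\pi(\lambda'_i\pm\lambda''_j)R_{ij}=0$. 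Hence the minimal polynomial of $A\pm B$ over $L$ divides $\pi$, and is therefore separable.

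\emph{Step 3 (descent to $K$), the main subtle point.} One must show that the minimal polynomial of $M=A\pm B\in M_n(K)$ over $K$ coincides with its minimal polynomial over $L$. The degree $d$ of the minimal polynomial is the least $d$ such that $I,M,\dots,M^d$ are linearly dependent. Linear dependence of vectors with entries in $K$ is a rank condition, and rank does not change under field extension. So $d$ is the same over $K$ and over $L$. Uniqueness of the monic dependence relation in degree $d$ then forces the two minimal polynomials to be equal.

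Thus $\mu_{A\pm B}$ is separable, and $A\pm B$ is semisimple by the characterization above. I expect Step 3 to be the only point requiring care; everything else is a direct reuse of \S\ref{sec.simultane} over $L$.
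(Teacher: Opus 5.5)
Your proof is correct and follows the same route as the paper's: diagonalize $A$ and $B$ over a splitting field, apply the simultaneous diagonalization of \S\ref{sec.simultane}, and descend to $K$ using the perfect-field characterization of semisimplicity. Your version is more careful than the paper's brief argument in two respects: you take $L$ to be a splitting field of $\mu_A\mu_B$, whereas the paper only mentions $\mu_A$, and you justify the descent by showing that the minimal polynomial of $A\pm B$ does not change under extension of scalars.
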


\begin{proof}
Since $A$ and $B$ commute and are semisimple, they are diagonalizable over the splitting field $L$ of $\mu_A$. By the discussion in \S  \ref{sec.simultane}, they are simultaneously diagonalizable in $M_n(L)$. Therefore $A\pm B$ is also diagonalizable in $L$, hence semi-simple in $K$. \\
\end{proof}

\subsection{Proof of Theorem~\ref{thm:jordan-chevalley}}
Let us define $S,N \in K[A]$ by 
$$
 S=\rho(A),\qquad N=A-\rho(A),
 $$
where $\rho \in K[t]$ is defined in \eqref{defrho}.  Theorem~\ref{thm:spectral-decomp} gives us the spectral decomposition $A=\sum_i(\lambda_iP_i+N_i)$ over the splitting field $L$ of $\mu_A$, and by the definition of $S$ and $N$, we have 
$$
  S=\sum_i\lambda_iP_i, \qquad N=\sum_iN_i.
 $$
Thus $S$ is diagonalizable over $L$, hence semisimple over $K$, and $N$ is nilpotent since each $N_i$ is nilpotent and $N_iN_j=0$ for $i\ne j$.

The argument proving uniqueness is classical; see \cite{Chevalley1951}. Let
$A=S'+N'$ be another such decomposition. Since all four matrices belong to
$K[A]$, they commute. The difference $S-S'$ is semisimple by
Lemma~\ref{lem:simultaneous-ss}, while $N'-N$ is nilpotent
because a sum of commuting nilpotent matrices is nilpotent. 
Since $S-S'=N'-N$, Lemma~\ref{SSnilpotentisnul} implies that both differences vanish.
This proves Theorem \ref{thm:jordan-chevalley}. 
\hfill\qedsymbol

\medskip

\begin{remark}
The essential step in the proof of Theorem~\ref{thm:jordan-chevalley} is the construction of the interpolation polynomial
$\rho$ satisfying \eqref{eq:rho-interpolation}. It is first constructed over the splitting field of the minimal polynomial and then shown, by a Galois invariance argument, to have coefficients in $K$. This is precisely where the assumption that $K$ is perfect is used.

Chevalley's original proof \cite{Chevalley1951} follows a different argument. He constructs the semisimple part directly over $K$, without factoring the minimal polynomial or computing its roots. 
A detailed account of Chevalley's algorithm, together with and explicitly computed example is given in \cite{CoutyEsterleZarouf2011}. This paper also contains an interesting historical account of the subject.
\end{remark}

\begin{remark}
In his classical paper \cite{Dunford1954}, Dunford introduced the class of
\emph{spectral operators} on complex Banach spaces by means of a resolution of
the identity satisfying suitable compatibility conditions. He proved that a
bounded operator is spectral if and only if it admits a unique decomposition
$$
 T=S+N,
$$
where $S$ is a spectral operator of scalar type, $N$ is quasinilpotent, and
$SN=NS$. See Theorem 8 in  \cite{Dunford1954}, and Theorem 5 in Chapter~XV.5 of \cite{DunfordSchwartzIII}.

This decomposition is formally similar to the Jordan--Chevalley decomposition, but the two results appeal to different theories. Their common  finite-dimensional complex case is a straightforward
specialization of each theory. The terminology
\emph{Jordan--Chevalley--Dunford decomposition}, occasionally found in the
literature, reflects the similarity of the resulting decompositions, but does not imply a common theoretical framework.
\end{remark}

\end{document}